\newtheorem{theorem}{Theorem}
\newtheorem{corollary}[theorem]{Corollary}
\theoremstyle{definition}
\newtheorem{fact}{Fact}
\newtheorem{claim}{Claim}
\theoremstyle{remark}
\newtheorem{remark}[theorem]{Remark}
\def\cf{{\mathrm{cf} \, }}
\def\ot{{\mathrm{ot} }}
\def\Lim{{\mathrm{Lim} }}
\def\rest{{\upharpoonright}}
\begin{document}

\title{The effect of forcing axioms on the tightness of the $G_\delta$-modification}

\author{William Chen-Mertens}
\address{York University}
\email{chenwb@gmail.com}

\author{Paul J. Szeptycki}
\address{York University}
\email{szeptyck@yorku.ca}

\begin{abstract}
We show that  $\mathsf{PFA}$ implies that the tightness $t(X_\delta)$ of the $G_\delta$-modification of a Fr\'echet $\alpha_1$-space $X$ is at most $\omega_1$, while $\Box(\kappa)$ implies that there is a Fr\'echet $\alpha_1$-space with $G_\delta$-tightness equal to $\kappa$. We use the example constructed from $\Box(\kappa)$ to show that a local version of the bound $t(X_\delta)\le 2^{t(X)L(X)}$ does not hold. We also construct, assuming $\mathsf{MA}$, an example of a Fr\'echet space whose $G_\delta$-tightness is larger than $\omega_1$.
\end{abstract}

\subjclass[2010]{Primary 54A20; Secondary 03E57, 03E35}
\keywords{tightness; Fr\'echet-Urysohn; $G_\delta$-modification; $\alpha_1$-spaces, square principles, PFA}
\maketitle

\section{Introduction}
Given a topological space $X$, we can form the $G_\delta$-modification, denoted $X_\delta$, by taking the $G_\delta$ sets of $X$ to be a base. This is interesting for many reasons; for example, $X_\delta$ is easily seen to be a $P$-space.

Bella and Spadaro \cite{017BS0} investigated the relationships between the cardinal invariants of a space $X$ and those of $X_\delta$. In particular, recall that the tightness $t(x,X)$ of $x\in X$ is the minimum $\kappa$ so that for every $A\subseteq X$ so that $x\in\overline{A}$, there is $B\subseteq A$ with $|B|\le \kappa$ so that $x\in \overline{B}$, and the tightness $t(X)$ of a space $X$ is the supremum of the tightness of all points of $X$. Dow et al. \cite{018DJSSW0} obtained many results on the tightness left open by the earlier work. In particular:
\begin{enumerate}
\item $t(X_\delta)\le 2^{t(X)}$ if $X$ is Lindel\"of.
\item if there is a nonreflecting stationary set of countable cofinality ordinals in $\kappa$, then there is a space $X$ so that $t(X)=\omega$ (in fact, $X$ is Fr\'echet) but $t(X_\delta)=\kappa$. 
\item if $\kappa$ is strongly compact and $t(X)<\kappa$, then $t(X_\delta)<\kappa$.
\end{enumerate}
They asked whether there is a $\mathsf{ZFC}$ example of a space $X$ so that $t(X)=\omega$ but $t(X_\delta)>2^\omega$.

This was answered positively by Usuba in \cite{018U0}. His example was countably tight but not Fr\'echet, and he asked whether there is a Fr\'echet example.

We show that, assuming $\mathsf{PFA}$, there is no Fr\'echet $\alpha_1$ example in $\mathsf{ZFC}$---in fact, we show that  $\mathsf{PFA}$ implies $t(X_\delta)\le \omega_1$ for $X$ a Fr\'echet $\alpha_1$-space. On the other hand, in the presence of $\Box(\kappa)$, there is a Fr\'echet $\alpha_1$-space $X$ having $t(X_\delta)=\kappa$. 

By increasing the strength of our assumptions from $\mathsf{PFA}$ to $\mathsf{MM}$, we do not get the analogous result for Fr\'echet spaces generally; in fact, we construct a Fr\'echet space whose tightness in the $G_\delta$-modification is $\mathfrak{c}>\omega_1$ from hypotheses much weaker than $\mathsf{MM}$.

Finally, we use the example constructed from $\Box(\kappa)$ to show that a local version of the bound $t(X_\delta)\le 2^{t(X)L(X)}$ does not hold, answering a question of Bella and Spadaro.

\section{Fr\'echet $\alpha_1$-spaces}

Let $X$ be a space. A point $x\in X$ is Fr\'echet if for every subset $A$, $x\in \overline{A}$ if and only if there is a subsequence $\{x_n:n<\omega\}\subseteq A$ converging to $x$, and $\alpha_1$ if whenever there are countably many sequences converging to $x$, then there is another converging sequence which, modulo finite sets, contains each of them. The space $X$ is Fr\'echet ($\alpha_1$) if every point is Fr\'echet ($\alpha_1$). The $\alpha_1$ property was introduced in \cite{979A0} to study the productivity of the Fr\'echet property.

The example in \cite{018DJSSW0} of a Fr\'echet space whose tightness in the $G_\delta$ topology is large is constructed from a non-reflecting stationary set of ordinals of countable cofinality. This assumption is consistent with $\mathsf{PFA}$, however, $\mathsf{PFA}$ suffices to give a bound with the additional hypothesis of being an $\alpha_1$-space.

A $P$-ideal on $X$ is an ideal $\mathcal{I}$ consisting of countable subsets of $X$ which contains all of its finite subsets and moreover has the property that for any $\{A_n:n<\omega\}\subseteq \mathcal{I}$, there is some $A\in \mathcal{I}$ so that $A_n\setminus A$ is finite for all $n$. The $P$-ideal dichotomy, or $\mathsf{PID}$, is a consequence of $\mathsf{PFA}$ that states that for every $P$-ideal $\mathcal{I}$ on any set $X$, either
\begin{itemize}
\item there is $Y\subseteq X$ uncountable so that every countable subset of $Y$ is in $\mathcal{I}$, or
\item $X$ can be written as the countable union of sets $Y_n$, each containing no infinite subset in the ideal.
\end{itemize}

\begin{theorem}\label{thm:pid}
$\mathsf{PID}$ implies that for any Fr\'echet $\alpha_1$-space $X$, $t(X_\delta)\le \omega_1$.
\end{theorem}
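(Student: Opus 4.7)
The plan is to fix $x\in X$ and $A\subseteq X$ with $x\in\overline{A}^{X_\delta}$, and produce $B\subseteq A$ of size at most $\omega_1$ with $x\in\overline{B}^{X_\delta}$. The right setup is a $P$-ideal on $A$: let $\mathcal{I}$ consist of those countable $C\subseteq A$ for which some sequence $s\subseteq A$ converging to $x$ in $X$ satisfies $C\subseteq^* s$. That $\mathcal{I}$ is an ideal is immediate (unions of two convergent sequences to $x$ still converge to $x$), and the $P$-ideal property is precisely where $\alpha_1$ enters: given $C_n\subseteq^* s_n$ with $s_n\subseteq A$ and $s_n\to x$, the $\alpha_1$ property yields $s^*\to x$ with $s_n\subseteq^* s^*$ for all $n$; replacing $s^*$ by $s^*\cap A$, which remains a sequence converging to $x$, gives a single element of $\mathcal{I}$ that mod-finite contains each $C_n$.

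Now apply $\mathsf{PID}$ to $\mathcal{I}$. Consider first the ``orthogonal'' alternative, where $A=\bigcup_n Y_n$ with each $Y_n$ containing no infinite member of $\mathcal{I}$. If $x\notin\overline{Y_n}^{X_\delta}$ for every $n$, the witnessing $G_\delta$-neighborhoods can be intersected to give a $G_\delta$-neighborhood of $x$ disjoint from $A$, contradicting $x\in\overline{A}^{X_\delta}$. So some $Y_n$ has $x\in\overline{Y_n}^{X_\delta}\subseteq\overline{Y_n}^X$, and Fr\'echetness gives a sequence $s\subseteq Y_n$ converging to $x$; but then $s$ is itself an infinite element of $\mathcal{I}$ contained in $Y_n$, contradicting orthogonality.

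So $\mathsf{PID}$ must instead supply an uncountable $Y\subseteq A$ with $[Y]^{\le\omega}\subseteq\mathcal{I}$. Fix any $B\subseteq Y$ with $|B|=\omega_1$; I claim $x\in\overline{B}^{X_\delta}$, which completes the proof. Given a $G_\delta$ neighborhood $G=\bigcap_n U_n$ of $x$, suppose toward contradiction that $B\cap G=\emptyset$. Each point of $B$ then lies outside some $U_n$, so by pigeonhole there is $n$ with $B\setminus U_n$ uncountable; pick any countably infinite $C\subseteq B\setminus U_n$. Since $C\in\mathcal{I}$, there is $s\subseteq A$ with $s\to x$ and $C\subseteq^* s$. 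Convergence of $s$ to $x$ forces $s\setminus U_n$ finite, while $C\subseteq X\setminus U_n$ gives $C\cap s\subseteq s\setminus U_n$; thus $C\cap s$ is finite, contradicting $C\subseteq^* s$ with $|C|=\omega$.

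The main obstacle is choosing $\mathcal{I}$ correctly: it must be strong enough that $\alpha_1$ certifies the $P$-property, yet weak enough that Fr\'echet extracts a witnessing sequence in the orthogonal case and that mod-finite containment still forces the $G_\delta$-closure conclusion in the other case. Once $\mathcal{I}$ is designed correctly, each $\mathsf{PID}$ branch collapses to a short pigeonhole or $G_\delta$-intersection argument.
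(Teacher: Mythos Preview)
Your proof is correct and follows essentially the same approach as the paper: both apply $\mathsf{PID}$ to the $P$-ideal of countable subsets of $A$ that converge to $x$, use Fr\'echetness to rule out the orthogonal alternative, and use the uncountable $\mathcal{I}$-homogeneous set as the witness $B$. Your ideal description (countable $C\subseteq A$ with $C\subseteq^* s$ for some $s\subseteq A$ converging to $x$) is just a rewording of the paper's simpler ``finite or converges to $x$'', and your Case~1 pigeonhole argument is a mild detour around the paper's direct observation that $B\setminus U$ is finite for every open $U\ni x$.
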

\begin{proof}
Suppose $x\in \overline{A}^\delta$ for some $x\in X$ and $A\subseteq X$ not containing $x$. Consider the ideal $\mathcal{I}$ of subsets of $A$ which are either finite or converge to $x$. Since $X$ is an $\alpha_1$-space, $\mathcal{I}$ is a $P$-ideal.

In the first case of the dichotomy, there is $B\subseteq A$ with $|B|=\omega_1$ such that every countable subset is in $\mathcal{I}$. So for any open $U$ containing $x$, $B\setminus U$ is finite. Therefore, $x\in \overline{B}^\delta$.

In the second case of the dichotomy, $A=\bigcup_{n<\omega} A_n$, where $A_n$ is orthogonal to $\mathcal{I}$. It follows that $x\not\in \overline{A}_n$, otherwise---as $X$ is Fr\'echet---there is some sequence $\{x_i:i<\omega\}\subseteq A_n$ converging to $x$, and then this would be in $\mathcal{I}$. So $x\not\in \overline{A}^\delta$, a contradiction.
\end{proof}
\begin{remark}
In fact, the proof was local, i.e., it showed that under $\mathsf{PID}$, for any Fr\'echet $\alpha_1$-point $x$, $t(x,X_\delta)\le \omega_1$. Moreover, for any $A$ with $x\in \overline{A}^\delta$ there is an $\omega_1$-sequence in $A$ converging to $x$ in $X_\delta$.

\end{remark}

We now show that the conclusion of Theorem \ref{thm:pid} can fail consistently 
by providing an example of a Fr\'echet $\alpha_1$-space $X$ whose tightness in the $G_\delta$ topology is large. The example starts from the assumption $\Box(\kappa)$, namely that there exists a sequence $\langle C_\alpha:\alpha<\kappa\rangle$ so that:
\begin{itemize}
\item $C_\alpha$ is club in $\alpha$ for $\alpha$ limit,
\item $C_{\alpha+1}=\{\alpha\}$,
\item $C_\alpha\cap \beta=C_\beta$ for every $\beta$ which is a limit point of $C_\alpha$,
\item there is no club $C$ in $\kappa$ so that $C\cap \alpha=C_\alpha$ for every $\alpha$ which is a limit point of $C$ (such a club is called a \emph{thread}).
\end{itemize}

In the process of ``walking'' from $\beta$ to $\alpha$ along the sequence, starting at $\beta_0=\beta$ and at each step taking $\beta_{i+1}=\min(C_{\beta_i}\setminus \alpha)$, we will eventually reach $\beta_n=\alpha$ as the $\beta_i$ are a decreasing sequence of ordinals. Let $\mathrm{Tr}(\alpha,\beta)=\{\beta_i:0\le i\le n\}$, and $\rho_2(\alpha,\beta)=n$, more precisely defined inductively so that $\rho_2(\alpha,\alpha)=0$ and $\rho_2(\alpha,\beta)=\rho_2(\alpha,\min(C_\beta\setminus\alpha)+1$.

We will use the following basic facts:
\begin{fact}[\cite{010T0}]\label{fact:walks}
 $\,$
\begin{enumerate}
	\item For $\alpha<\beta<\kappa$,
$$\sup_{\xi<\alpha}|\rho_2(\xi,\alpha)-\rho_2(\xi,\beta)|<\omega.$$ 
	\item For every family $\mathcal{F}\subseteq [\kappa]^{<\omega}$ of pairwise disjoint finite subsets of $\kappa$ with $|\mathcal{F}|=\kappa$ and for every integer $n$ there exist $a$, $b$ both in $\mathcal{F}$
such that $$\rho_2(\alpha, \beta) > n$$ for all $\alpha\in a$, $\beta\in b$.
\end{enumerate}
\end{fact}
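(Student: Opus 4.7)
The plan is to prove the two parts separately, following the standard approach from Todorcevic's theory of walks on ordinals \cite{010T0}. Both parts hinge on the coherence property $C_\alpha\cap\beta=C_\beta$ whenever $\beta$ is a limit point of $C_\alpha$; part (2) additionally uses the non-existence of a thread.

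For part (1), I would proceed by induction on $\beta$. The structural observation is that the walk from $\beta$ down to a sufficiently small $\xi<\alpha$ first traces out the finite set $\mathrm{Tr}(\alpha,\beta)$, reaching $\alpha$, and then continues as the walk from $\alpha$ to $\xi$. Letting $\lambda$ denote the maximum of $\bigcup\{C_\gamma\cap\alpha:\gamma\in \mathrm{Tr}(\alpha,\beta)\setminus\{\alpha\}\}$, one checks that for every $\xi\in(\lambda,\alpha)$ the walk from $\beta$ to $\xi$ does indeed first reach $\alpha$ (using coherence to identify the choices $\min(C_{\gamma}\setminus\xi)$ step by step), so $\rho_2(\xi,\beta)=\rho_2(\alpha,\beta)+\rho_2(\xi,\alpha)$, giving a constant discrepancy on that interval. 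For $\xi\le\lambda$, the interval $[0,\lambda]$ decomposes into finitely many sub-intervals on each of which the walks from $\beta$ and from $\alpha$ first synchronize at some intermediate ordinal strictly above $\xi$, and applying the inductive hypothesis to those intermediate ordinals bounds the discrepancy uniformly. Taking the maximum over the finitely many cases yields the stated bound.

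For part (2), I would argue by contradiction: suppose some $n<\omega$ and some family $\mathcal{F}\subseteq[\kappa]^{<\omega}$ of size $\kappa$ satisfy that for every pair $a,b\in\mathcal{F}$ there exist $\alpha\in a$ and $\beta\in b$ with $\rho_2(\alpha,\beta)\le n$. The aim is to extract from this data a thread through $\langle C_\alpha:\alpha<\kappa\rangle$, contradicting $\Box(\kappa)$. After a $\Delta$-system plus pigeonhole thinning of $\mathcal{F}$ to a still-$\kappa$-sized subfamily in which all members have the same size, the same internal walk pattern, and strictly increasing maxima, part (1) translates the bounded walk distances between members into bounded symmetric differences among the traces $\mathrm{Tr}(0,\beta)$ along a cofinal set $S\subseteq\kappa$. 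These bounded traces can be coherently stitched into a single club $C$ of $\kappa$ satisfying $C\cap\alpha=C_\alpha$ for every limit point $\alpha$ of $C$, i.e., a thread.

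I expect the hardest step to be this final gluing. Part (1) only provides a bound on $\rho_2$ and not exact identification of the $C$-traces, so promoting ``bounded walk distance on a $\kappa$-family'' to ``genuine thread'' requires a delicate oscillation-type argument that organizes the walk patterns on $\mathcal{F}$ so that the local coherence on each pair globalizes to a single choice of threading ordinals. This combinatorial step is precisely the heart of Todorcevic's non-triviality theorem for $\rho_2$ under $\Box(\kappa)$, and it is what makes the argument genuinely dependent on the full strength of the square principle rather than just on the existence of a coherent $C$-sequence.
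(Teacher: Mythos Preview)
The paper does not prove this statement at all: it is stated as a \emph{Fact} with a citation to Todorcevic's monograph \cite{010T0}, and the paper proceeds immediately to extract the weaker consequence (2') and then to the construction in Theorem~\ref{thm:example}. So there is no ``paper's own proof'' to compare against; the authors simply quote the result.

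Your sketch is a reasonable outline of how the proofs go in \cite{010T0}. For part~(1), the heart of the matter is exactly what you identify: above the ordinal $\lambda=\max\bigcup\{C_\gamma\cap\alpha:\gamma\in\mathrm{Tr}(\alpha,\beta)\setminus\{\alpha\}\}$ the walk from $\beta$ to $\xi$ passes through $\alpha$ and the difference is the constant $\rho_2(\alpha,\beta)$. Below $\lambda$ the standard argument organizes things via the \emph{full lower trace} $F(\alpha,\beta)$ rather than an explicit induction on $\beta$, but your phrasing (``finitely many sub-intervals on each of which the walks synchronize at some intermediate ordinal'') is an informal description of the same phenomenon. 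For part~(2), you have the right architecture---assume failure, thin $\mathcal{F}$, and build a thread---and you correctly flag the gluing as the delicate point; this is precisely Todorcevic's argument that the unboundedness property of $\rho_2$ characterizes the absence of a thread, which the paper itself remarks on just after stating the Fact.

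In short: your proposal is appropriate and tracks the cited source, but since the paper treats this as a black box there is nothing further to compare.
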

Part (2) of Fact \ref{fact:walks} can be shown to be equivalent to the nonexistence of a thread through the sequence. However, we will only need the following direct consequence, obtained from the case where $\mathcal{F}$ consists of singletons or of pairs:

\begin{itemize}
    \item [(2')] For any $A,B\subseteq \kappa$ unbounded and any $n<\omega$, there are $\alpha<\beta$ so that $\alpha\in A$, $\beta\in B$, and $\rho_2(\alpha,\beta)>n$. In particular, for any $A\subseteq \kappa$ unbounded and any $n<\omega$, there are $\alpha<\beta$ both in $A$ so that $\rho_2(\alpha,\beta)>n$.
\end{itemize}

\begin{theorem}\label{thm:example}
Assume $\kappa$ is regular and $\Box(\kappa)$ holds. Then there is a Fr\'echet $\alpha_1$-space $X$ so that $t(X_\delta)=\kappa$.
\end{theorem}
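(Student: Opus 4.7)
The plan is to define $X = \{*\} \cup \kappa$ with the points of $\kappa$ isolated, and to build the neighborhood filter at $*$ from a suitable ideal $\mathcal{I}$ on $\kappa$ constructed using the walks $\rho_2$. Basic open neighborhoods of $*$ will be sets of the form $\{*\} \cup (\kappa \setminus F)$ for $F$ a finite union of generators of $\mathcal{I}$, so that $G_\delta$-neighborhoods of $*$ are $\{*\} \cup (\kappa \setminus S)$ where $S$ ranges over the $\sigma$-ideal $\mathcal{I}_\sigma$ generated by $\mathcal{I}$. Under this framework, the equality $t(X_\delta) = \kappa$ splits into (a) $\kappa \notin \mathcal{I}_\sigma$ (ensuring $* \in \overline{\kappa}^\delta$), and (b) every $B \subseteq \kappa$ with $|B| < \kappa$ lies in $\mathcal{I}_\sigma$ (ensuring $* \notin \overline{B}^\delta$). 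Since $\kappa$ is regular, (b) reduces to arranging that every bounded subset of $\kappa$ is in $\mathcal{I}_\sigma$; the upper bound $t(X_\delta)\le \kappa$ is automatic from $|X|\le \kappa$.

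Condition (a) is the reason we use $\Box(\kappa)$: Fact \ref{fact:walks}(2) (equivalently, the nonexistence of a thread through the sequence) is what rules out covering $\kappa$ by countably many ``walk-small'' sets, provided the generators of $\mathcal{I}$ are chosen carefully. A naive choice, using all level sets $\{\beta : \rho_2(\alpha, \beta) \le n\}$ as generators, fails because $\bigcup_n \{\beta : \rho_2(0,\beta) \le n\} = \kappa$; the generators must be restricted so that no countable subcollection covers $\kappa$, while still admitting a countable subcollection covering each bounded initial segment. The nonthreading property of the $\Box(\kappa)$-sequence---packaged as (2')---is the tool that guarantees such a restriction exists.

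The Fréchet and $\alpha_1$ properties at $*$ form the delicate combinatorial core. For regular uncountable $\kappa$, every $\omega$-sequence in $\kappa$ is bounded, and Fact \ref{fact:walks}(1) implies that the walks $\rho_2(\alpha, \cdot)$ stabilize along such bounded sequences, which would obstruct convergence to $*$ if the ideal were defined too crudely. The construction must therefore arrange $\mathcal{I}$'s generators to be sparse enough that, for every $A \notin \mathcal{I}$, an $\omega$-sequence in $A$ eventually escaping every generator can be built; I would build such sequences inductively via (2'), satisfying a growing finite list of walk conditions at each step. The $\alpha_1$ property then follows by a diagonal combination of countably many such sequences, using Fact \ref{fact:walks}(1) to control the walks along the combined sequence. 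The main obstacle is this tension between (b), which demands $\mathcal{I}$ be rich enough that every bounded set belongs to $\mathcal{I}_\sigma$, and the Fr\'echet requirement, which demands $\mathcal{I}$ be sparse enough to be escapable by $\omega$-sequences; identifying a family of walk-based generators that strikes this balance is the central technical step, and the combinatorial structure provided by $\Box(\kappa)$---via both facts about $\rho_2$---is what makes it possible.
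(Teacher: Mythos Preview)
Your framework matches the paper's: a single non-isolated point with neighborhood filter dual to an ideal generated by $\rho_2$-level sets. The gap is that you never write down the generators, and the one candidate you probe goes in the wrong direction. The paper's generators are $\{\xi<\alpha:\rho_2(\xi,\alpha)\le n\}$ for $\alpha<\kappa$ and $n<\omega$---fixing the \emph{upper} coordinate of $\rho_2$, not the lower. Each such set is bounded by $\alpha$, so a countable union of generators is bounded (regularity of $\kappa$), while every initial segment $\alpha$ equals $\bigcup_n\{\xi<\alpha:\rho_2(\xi,\alpha)\le n\}$; your conditions (a) and (b) are then immediate, with no appeal to $(2')$ at that stage.

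The Fr\'echet and $\alpha_1$ verifications are also more direct than you anticipate once the ideal is named. A sequence converges to the top point iff every $\rho_2(\cdot,\alpha)$ is finite-to-one on it; this is exactly Todorcevic's $P$-ideal attached to the square sequence, so $\alpha_1$ is just the $P$-ideal property---your ``diagonal combination using Fact~\ref{fact:walks}(1)'' is precisely that argument. For Fr\'echet, the point is that $\kappa\in\overline A$ forces some $\alpha$ with $\rho_2(\cdot,\alpha)$ unbounded on $A\cap\alpha$; once such an $\alpha$ is in hand, any $\xi_n\in A\cap\alpha$ with $\rho_2(\xi_n,\alpha)\ge n$ converges by Fact~\ref{fact:walks}(1), and no inductive construction accumulating walk constraints is needed. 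The place where $(2')$ actually enters is in showing that every unbounded $B\subseteq\kappa$ has $\kappa$ in its closure (equivalently, that if $\rho_2(\cdot,\alpha)$ is bounded on $A\cap\alpha$ for \emph{every} $\alpha$ then $\kappa\notin\overline A$), via a pigeonhole on the bounds $n_\alpha$. So the tension you worry about between richness and escapability dissolves once you reverse which coordinate of $\rho_2$ is frozen in the level sets.
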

\begin{proof}
Let $\langle C_\alpha:\alpha<\kappa\rangle$ be the $\Box(\kappa)$ sequence.

Then we define a topology on $X=\kappa+1$ so that the points of $\kappa$ are isolated, and the open neighborhoods for $\kappa$ are generated by sets of the form
$$\{\kappa\}\cup \bigcup_{\alpha\in \Lim(\kappa)}\{\xi<\alpha:\rho_2(\xi,\alpha)>n_\alpha\}$$
where $n_\alpha<\omega$.

Equivalently, using part (1) of Fact \ref{fact:walks}, the open neighborhoods are generated by sets of the form
$$\{\kappa\}\cup \kappa\setminus\{\xi<\alpha:\rho_2(\xi,\alpha)\le n\}$$
where $\alpha<\kappa$ and $n<\omega$.

With this topology, the ideal of converging sequences is exactly the $P$-ideal
$$\mathcal{I}=\{A\subseteq \kappa: (\forall \alpha<\kappa) (\rho_2)_\alpha \textrm{ is finite-to-one on }A \}$$
used by Todorcevic \cite{000T0} to show that $\mathsf{PID}$ refutes $\Box(\kappa)$ for all regular uncountable $\kappa$.

It is easy to check that this defines a Hausdorff topology on $\kappa+1$.

\begin{claim}
$X$ is Fr\'echet.
\end{claim}
Suppose that $\kappa\in \overline{A}$. Then there must be some $\alpha<\kappa$ so that $\sup_{\xi\in A\cap \alpha}\rho_2(\xi,\alpha)=\omega$, otherwise for each $\alpha$ we can take $n_\alpha=\sup_{\xi<\alpha}\rho_2(\xi,\alpha)$ and define an open neighborhood of $\kappa$ disjoint from $A$. For each $n<\omega$ let $\xi_n\in A\cap\alpha$ be so that $\rho_2(\xi_n,\alpha)\ge n$. Now it is straightforward using Fact \ref{fact:walks} (1) to show that $\{\xi_n:n<\omega\}$ converges to $x$.

\begin{claim}
$X$ is an $\alpha_1$-space.
\end{claim}

This claim follows from the proof that $\mathcal{I}$ is a $P$-ideal: suppose that $A_n, n<\omega$, are converging sequences. Fix a limit ordinal $\alpha\ge \sup(\bigcup_n A_n)$. Let $B_n=A_n\setminus \{\xi:\rho_2(\xi,\alpha)\le n\}$, so $B =\bigcup_n B_n$ almost contains each $A_n$. $B$ is itself a converging sequence, since for each $n$ the set $\{\xi\in B:\rho_2(\xi,\alpha)\le n\}$ is contained in $\bigcup_{m\le n}\{\xi\in B_m:\rho_2(\xi,\alpha)\le n\}$, a finite union of finite sets.

\begin{claim}\label{unbddclaim}
Suppose $B\subseteq \kappa$ is unbounded. Then $\kappa\in \overline{B}$.
\end{claim}
Suppose not, so for every $\alpha\in\Lim(\kappa)$ there is $n_\alpha<\omega$ so that
$$B\cap \bigcup_{\alpha\in \Lim(\kappa)}\{\xi<\alpha: \rho_2(\xi,\alpha)>n_\alpha \}=\emptyset.$$
There is an unbounded $D\subseteq \Lim(\kappa)$ and $n<\omega$ so that $n_\alpha=n$ for all $\alpha\in D$. By Fact \ref{fact:walks} (2), there are $\alpha\in B$ and $\beta\in D$ so that $\alpha<\beta$ and $\rho_2(\alpha,\beta)>n$, a contradiction.

By Claim \ref{unbddclaim}, every open neighborhood of the point $\kappa$ is a co-bounded subset of $\kappa$. Since $\cf(\kappa)>\omega$, we immediately have:

\begin{claim}
$\kappa\in \overline{\kappa}^\delta.$
\end{claim}

Now it remains to prove

\begin{claim}
$t(X_\delta)=\kappa$.
\end{claim}
Suppose otherwise. Then there is some bounded $B\subseteq \kappa$ so that $\kappa\in\overline{B}^\delta$. Let $\beta\ge \sup B$ be a limit ordinal.

Now let $n<\omega$. By Fact \ref{fact:walks} each $\alpha<\kappa$ there is $n_\alpha$ so that $\rho_2(\xi,\alpha)>n_\alpha$ implies $\rho_2(\xi,\beta)>n$. Define 
$$U_n=\{\kappa\}\cup \bigcup_{\alpha\in \Lim(\kappa)}\{\xi<\alpha:\rho_2(\xi,\alpha)>n_\alpha\}$$
so that $U_n$ is open and disjoint from $\{\xi\in B:\rho_2(\xi,\beta)=n\}$. So $\bigcap_n U_n$ is a $G_\delta$ set disjoint from $B$. 

This completes the proof of the theorem.

\end{proof}

\section{A Fr\'echet space having $G_\delta$-tightness $\omega_2$}
The principle $\mathsf{MM}$ refutes all of the examples of Fr\'echet spaces constructed thus far with $G_\delta$-tightness larger than $\omega_1$. In light of Theorem \ref{thm:pid}, it is natural to conjecture that $\mathsf{MM}$ implies that all Fr\'echet spaces have $G_\delta$-tightness at most $\omega_1$. Surprisingly, though, it implies the opposite.

\begin{theorem}
Assume $\mathsf{MA}(\omega_1)$ and $2^{\omega_1}=\omega_2$. Then there is a Fr\'echet topology on $X=\omega_2+1$ such that $t(X_\delta)=\omega_2$.
\end{theorem}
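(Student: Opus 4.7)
The plan is to build the topology on $X=\omega_2+1$ by constructing, via a transfinite recursion of length $\omega_2$, a neighborhood filter $\mathcal{F}$ at the unique non-isolated point $\omega_2$ (all other points isolated). The recursion uses $2^{\omega_1}=\omega_2$ for bookkeeping and $\mathsf{MA}(\omega_1)$ to interleave two competing tasks: adding filter elements that ``kill'' each $\omega_1$-sized subset in the $G_\delta$-topology, and committing countable convergent sequences to secure the Fr\'echet property. Since $|[\omega_2]^{\omega_1}|=\omega_2$ under the hypothesis, enumerate $[\omega_2]^{\omega_1}$ as $\{B_\alpha:\alpha<\omega_2\}$. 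Throughout the recursion I maintain an increasing filter $\mathcal{F}_\alpha$ on $\omega_2$ of size at most $\omega_1$, together with a commitment family $\mathcal{S}_\alpha$ of countable subsets of $\omega_2$ of size at most $\omega_1$, subject to the invariant that $S\subseteq^* F$ for every $S\in\mathcal{S}_\alpha$ and $F\in\mathcal{F}_\alpha$.

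At each stage $\alpha<\omega_2$ I carry out two tasks. The \emph{kill step} adds countably many sets $F_{\alpha,n}\in\mathcal{F}_{\alpha+1}$ whose complements cover $B_\alpha$, while each $F_{\alpha,n}$ still almost contains every $S\in\mathcal{S}_\alpha$; this reduces to partitioning $B_\alpha$ into countably many pieces each almost disjoint from every element of $\mathcal{S}_\alpha$, which I produce by a $\sigma$-centered forcing of finite partial partitions carrying a finite promise set from $\mathcal{S}_\alpha$, extracted via $\mathsf{MA}(\omega_1)$. The \emph{Fr\'echet step} processes a bookkeeping pair $(A,\alpha)$: if $A$ meets every element of $\mathcal{F}_{\alpha+1}$, apply $\mathsf{MA}(\omega_1)$ to the standard $\sigma$-centered pseudo-intersection forcing with finite approximations $(s,T)$, $s\in[A]^{<\omega}$, $T\in[\mathcal{F}_{\alpha+1}]^{<\omega}$, extended by new points drawn from $\bigcap T$, producing a countable $C\subseteq A$ with $C\subseteq^* F$ for every $F\in\mathcal{F}_{\alpha+1}$, and add $C$ to $\mathcal{S}_{\alpha+1}$. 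Set $\mathcal{F}=\bigcup_\alpha\mathcal{F}_\alpha$ as the neighborhood base at $\omega_2$.

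To verify Fr\'echet, the maintained invariant ensures that every committed $S\in\bigcup_\alpha\mathcal{S}_\alpha$ is a pseudo-intersection of the full filter $\mathcal{F}$ and hence converges to $\omega_2$; the bookkeeping is arranged so that every $A$ with $\omega_2\in\overline{A}$ yields a committed $C\subseteq A$. For $t(X_\delta)=\omega_2$: any countable subfamily of $\mathcal{F}$ has complement a countable union of $\omega_1$-sized sets, of size at most $\omega_1<\omega_2$, so $\omega_2\in\overline{\omega_2}^\delta$; for any $B\in[\omega_2]^{\omega_1}$, $B=B_\alpha$ for some $\alpha$ and the countable family $\{F_{\alpha,n}\}_n$ witnesses $\omega_2\notin\overline{B}^\delta$. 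The upper bound $t(X_\delta)\le\omega_2$ is immediate from $|X|=\omega_2$.

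The main obstacle is the simultaneous feasibility of the kill step and the bookkeeping for Fr\'echet. In the kill step, when $|\mathcal{S}_\alpha|=\omega_1$ the union $\bigcup\mathcal{S}_\alpha$ may cover $B_\alpha$, so producing a partition of $B_\alpha$ into countably many almost-$\mathcal{S}_\alpha$-disjoint pieces is delicate; the correct forcing must carry a finite promise subset of $\mathcal{S}_\alpha$ as a side condition, with $\mathsf{MA}(\omega_1)$ providing the generic partition. Equally subtle is arranging the bookkeeping to address every $A\subseteq\omega_2$ with $\omega_2\in\overline{A}$ at a stage where $A$ is already positive in $\mathcal{F}_\alpha$; since there are a priori $2^{\omega_2}$ such $A$, this appears to require either a reflection-style argument exploiting the $\omega_1$-character of each $\mathcal{F}_\alpha$, or a modification where the Fr\'echet step is triggered dynamically whenever a new positive set arises, rather than along a fixed length-$\omega_2$ enumeration.
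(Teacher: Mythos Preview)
Your proposal is incomplete, and you yourself identify the main gap in your final paragraph: the Fr\'echet bookkeeping has to address $2^{\omega_2}$ potential sets $A\subseteq\omega_2$, and neither of your suggested workarounds is carried out. The ``reflection'' idea does not obviously succeed: if $|A|=\omega_2$ and $\omega_2\in\overline{A}$ in the final topology, you would need some initial segment $A\cap\gamma$ to be positive in $\mathcal{F}_\beta$ at the stage $\beta$ when it is processed, but by then $\mathcal{F}_\beta$ may already contain a co-$\omega_1$ set disjoint from $A\cap\gamma$, and chasing this produces an unbounded regress. More structurally, a convergent sequence inside $A$ is a countable pseudo-intersection of the final filter $\mathcal{F}$; since $\mathcal{F}$ has $\omega_2$ generators, $\mathsf{MA}(\omega_1)$ gives you no direct handle on producing one. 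A secondary issue is that your kill-step forcing is not $\sigma$-centered (there are $\omega_1$ finite partial functions on $B_\alpha$); two conditions with the same first coordinate are compatible, so it is $\omega_1$-centered, but the ccc then needs a genuine $\Delta$-system argument that must cope with cross-terms between the finite side conditions of one and the working part of another.

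The paper's proof takes a completely different route that sidesteps both problems by separating the roles of the two hypotheses. From $2^{\omega_1}=\omega_2$ it extracts $\diamondsuit_{E^2_0}$ (Gregory), and from the diamond sequence builds once and for all a ladder system $\langle S_\alpha:\alpha\in E^2_0\rangle$ with the guessing property that every unbounded subset of $\omega_2$ contains some $S_\alpha$. Neighborhoods of $\omega_2$ are declared to be exactly the sets almost containing every $S_\alpha$. The Fr\'echet property is then immediate from the definition---no bookkeeping is needed, since if $\omega_2\in\overline{A}$ then some $S_\alpha\cap A$ is infinite and converges---and the guessing property gives $\omega_2\in\overline{\omega_2}^\delta$. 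Only the inequality $t(X_\delta)>\omega_1$ uses $\mathsf{MA}(\omega_1)$, via Devlin--Shelah ladder-system uniformization applied inductively along initial segments of $\omega_2$ to produce, for each $\gamma<\omega_2$, a function $F:\gamma\to\omega$ that is finite-to-one on every $S_\alpha$ with $\alpha<\gamma$. The key idea you are missing is that fixing the convergent sequences in advance via a guessing ladder system makes the Fr\'echet property automatic.
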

\begin{proof}
By a classical result of Gregory \cite{976G0}, the cardinal arithmetic assumption implies that $\diamondsuit_{E^2_0}$ holds. From the diamond sequence, we can define a ladder system $\{S_\alpha:\alpha\in E^2_0\}$ so that:
\begin{itemize}
\item $\ot(S_\alpha)=\omega$,
\item $\sup S_\alpha=\alpha$,
\item for any unbounded $X\subseteq \omega_2$, the set $\{\alpha\in E^2_0:S_\alpha\subseteq X\}$ is stationary.
\end{itemize}
Let all the points in $\omega_2$ be isolated. The open neighborhoods of $\omega_2$ are given by sets of the form $\{\omega_2\}\cup \bigcup_{\alpha\in E^2_0} T_\alpha,$
where $S_\alpha\setminus T_\alpha$ is finite.

This defines a Fr\'echet topology, since for any set $A$ with $\omega_2\in \overline{A}$, there is some $\alpha$ so that $S_\alpha\cap A$ is infinite (and thus of order-type $\omega$), and then $S_\alpha\cap A$ converges to $\omega_2$.

Now $\omega_2\in \overline{\omega_2}^\delta$, otherwise $\{\omega_2\}$ is $G_\delta$ and write $\{\omega_2\}=\bigcap_{n<\omega} U_n$, where each $U_n$ is an open neighborhood of $\omega_2$. Define $A_n=\{\xi<\omega_2:n \textrm{ is least such that } \xi\not\in U_n\}$, so that $A_n$ partitions $\omega_2$. Now there is $n$ so that $A_n$ is unbounded in $\omega_2$, and therefore by the property of $\{S_\alpha:\alpha\in E^2_0\}$ obtained from $\diamondsuit_{E^2_0}$ there exists $\alpha\in E^2_0$ so that $S_\alpha\subseteq A_n$. Now $S_\alpha\cap U_n=\emptyset$, which is impossible as $U_n$ is open.

Finally we prove that $t(X_\delta)>\omega_1$. For $\gamma<\omega_2$, we find a $G_\delta$ subset containing the point $\omega_2$ but disjoint from $\gamma$. For this, it is enough to find a function $F:\gamma\rightarrow \omega$ so that $F\rest S_\alpha$ is finite-to-one for all $\alpha< \gamma$.

From  $\mathsf{MA}(\omega_1)$, we use only the following consequence proven by Devlin and Shelah \cite{978DS0}, known as ladder system uniformization:
\begin{itemize}
\item If $\langle S_\alpha:\alpha\in\Lim(\omega_1)\rangle$ is a \emph{ladder system} (so that $S_\alpha$ has order-type $\omega$ and is cofinal in $\alpha$), and $f_\alpha:S_\alpha\rightarrow\omega$, then there is $f:\omega_1\rightarrow \omega$ so that $f\rest \alpha$ agrees with $f_\alpha$ on all but finitely many members of $S_\alpha$.
\end{itemize}

We proceed to construct $F$ by induction on $\gamma$. Fix a bijection $\varphi:\omega\times\omega\rightarrow\omega$.

For $\alpha\in\Lim(\omega_1)$, let $f_\alpha:\alpha\rightarrow\omega$ be the increasing enumeration of $S_\alpha$, extended arbitrarily to points outside of $S_\alpha$. Then the uniformizing function $f$ is finite-to-one on all $S_\alpha$. 

If $\gamma=\beta+1$ for some $\beta$, then let $F_\beta$ be the function constructed at $\beta$ and let $F(\xi)=\varphi(F_\beta(\xi),f_\alpha(\xi))$. If $\gamma$ is of the form $\delta+\omega$, then take the function constructed at stage $\delta+1$ and extend it arbitrarily. 

Otherwise, take a club $E\subseteq \gamma$ of minimum possible order-type consisting of limit ordinals. Using the induction hypothesis, for each $\beta\in E$ there is $F_\beta:\beta\rightarrow \omega$ so that $F_\beta\rest S_\alpha$ is finite-to-one for all $\alpha< \beta$. Furthermore, using ladder system uniformization in case $\cf(\gamma)=\omega_1$, there is a function $F':\bigcup_{\beta\in E} S_\beta\rightarrow \omega$ which is finite-to-one on every $S_\beta$, $\beta\in E$ (noting that $\bigcup_{\beta\in E} S_\beta$ is a homeomorphic copy of $\omega_1$). Extend $F'$ arbitrarily to $\gamma$. Finally, let $F(\xi)=\varphi(F_\beta(\xi),F'(\xi))$, where $\beta$ is the minimum of $E\setminus (\xi+1)$.

\end{proof}

\section{Countably tight points in compact spaces}
As mentioned in the introduction, it was shown in \cite{018DJSSW0} that a compact Hausdorff space with countable tightness has tightness at most $\mathfrak{c}$ in the $G_\delta$-modification. But their proof was not local---showing that a point had $G_\delta$-tightness $\le \mathfrak{c}$ used the countable tightness of other points in the space.

It is natural to ask whether the local version of the result indeed holds, that is, whether a point of countable tightness in a compact space $X$ has tightness at most $\mathfrak{c}$ in the $G_\delta$-modification. We can use the example of Theorem \ref{thm:example} to answer this question negatively by constructing a compact Hausdorff space having a point of countable tightness whose tightness becomes large in the $G_\delta$-modification.

The construction is to take the \v{C}ech--Stone compactification $\beta X$ of the space of Theorem \ref{thm:example}. In the language of \cite{979A0}, the \emph{absolute tightness} of a point $x\in X$ is its tightness in any compactification (and there it is proven that this does not depend on the choice of compactification). We will show that the absolute tightness of $\kappa\in X$ is countable.

\begin{theorem}\label{thm:localexample}
Let $X$ be the space constructed in Theorem \ref{thm:example}. Then in the \v{C}ech--Stone compactification $\beta X$ (and hence in any compactification of $X$) the point $\kappa$ is a Fr\'echet point.
\end{theorem}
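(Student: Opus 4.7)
The plan is to show that $\kappa$ is a \emph{strongly Fr\'echet} point of $X$---i.e., for every decreasing sequence $A_0\supseteq A_1\supseteq\cdots$ of subsets of $X$ with $\kappa\in\overline{A_n}$ for all $n$, one can select $x_n\in A_n$ with $x_n\to\kappa$ in $X$---and then invoke the classical transfer theorem of Michael (``A quintuple quotient quest,'' 1972) that a strongly Fr\'echet point in a Hausdorff space remains a Fr\'echet point in every Hausdorff space containing $X$ as a dense subspace. Since the sets $V_{\alpha,n}$ are clopen in $X$ and form a base there, $X$ is zero-dimensional Hausdorff and hence Tychonoff, so $X$ is densely embedded in $\beta X$ and the theorem follows.

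To verify the strongly Fr\'echet property at $\kappa$, fix such a decreasing family $\{A_n\}$. By Fr\'echetness at $\kappa$ (Theorem \ref{thm:example}), for each $n$ we may pick a sequence $S_n\subseteq A_n$ converging to $\kappa$. By the $\alpha_1$ property at $\kappa$ (also Theorem \ref{thm:example}), there is a single convergent sequence $S\to\kappa$ such that $S_n\setminus S$ is finite for every $n$. Enumerating $S=\{y_k:k<\omega\}$, the set $S\cap A_n\supseteq S\cap S_n$ is cofinite in $S_n$ and therefore infinite, so we may inductively choose indices $k_0<k_1<\cdots$ with $y_{k_n}\in A_n$. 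The subsequence $(y_{k_n})$ of $S$ converges to $\kappa$ and satisfies $y_{k_n}\in A_n$, as required.

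The main subtlety is the invocation of Michael's transfer theorem: the space $\beta X$ may have character much larger than $\omega$ at $\kappa$ and contain many new points of $\beta X\setminus X$ accumulating at $\kappa$, but the strongly Fr\'echet property of $\kappa$ in $X$ is exactly what is needed to diagonalize against a convergent sequence in $X$ and extract a convergent sequence in an arbitrary dense Hausdorff extension. The $\alpha_1$ hypothesis of Theorem \ref{thm:example} is used in the present proof only to upgrade the Fr\'echet property to the strongly Fr\'echet property; the remainder of the argument is standard.
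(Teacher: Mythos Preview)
Your argument that $\kappa$ is strongly Fr\'echet (countably bisequential) in $X$ is correct: Fr\'echet together with $\alpha_1$ does imply strongly Fr\'echet, exactly as you show. The difficulty is the transfer step. Michael's paper does not contain the statement you attribute to it; the relevant result about dense extensions is Arhangel'skii's characterization (indeed in the paper cited here as \cite{979A0}): a point $x$ of a Tychonoff space $X$ is Fr\'echet in $\beta X$ (equivalently, in every compactification) if and only if $x$ is \emph{bisequential} in $X$. Bisequentiality is strictly stronger than the strongly Fr\'echet property---it demands a countable ``converging'' subfamily from \emph{every} ultrafilter clustering at $x$, not merely from every countable descending filter base---and Fr\'echet $\alpha_1$ does not imply it in general. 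So from what you have verified one cannot yet conclude that $\kappa$ is Fr\'echet in $\beta X$.

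The paper proceeds by a direct argument inside $\beta X$: to each non-principal closed ultrafilter $F$ it assigns the least ordinal $\alpha(F)$ lying in $F$ and, for $\beta\ge\alpha(F)$, the unique $n_{\beta,F}$ with $\{\xi<\alpha(F):\rho_2(\xi,\beta)=n_{\beta,F}\}\in F$. One then runs essentially the same ``unbounded vs.\ bounded'' dichotomy used to prove the Fr\'echet property of $X$, this time using Fact~\ref{fact:walks} together with the walk structure (the trace $\mathrm{Tr}(\alpha(F),\beta)$) to locate, for suitable $F$ and $\beta$, an interval below $\alpha(F)$ on which $\rho_2(\cdot,\beta)$ is large; minimality of $\alpha(F)$ forces this interval into $F$ and yields the contradiction. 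In effect the paper is verifying bisequentiality at $\kappa$ by hand, and this genuinely uses the combinatorics of $\rho_2$ rather than only the abstract $\alpha_1$ property. Your approach could be salvaged by proving directly that $\kappa$ is bisequential in $X$, but that amounts to the same work.
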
 
\begin{proof}
Recall that the points of $\beta X$ are the maximal filters of closed subsets of $X$, and the topology consists of all sets of the form $\{F\in \beta X : (X\setminus U)\not\in F  \}$, where $U$ is open in $X$.  $X$ embeds into $\beta X$, where each $x\in X$ is mapped to the principal ultrafilter on $x$.

Suppose $\kappa\in \overline{A}$ for $A\subseteq \beta X$. We can assume that $A\subseteq \beta X\setminus X$.

\begin{claim}
$\,$
\begin{itemize}
\item For each $F\in \beta X\setminus X$, let $\alpha(F)$ be the least ordinal $\alpha$ so that $\alpha\in F$. Then $\alpha(F)$ is a limit ordinal less than $\kappa$.

\item For $F\in A$ and $\beta\in \kappa\setminus\alpha(F)$, there is $n$ so that $$\{\xi<\alpha(F):\rho_2(\xi,\beta)\le n\}\in F.$$
\end{itemize}
\end{claim}
Since $F$ is non-principal, $\alpha(F)$ is a limit ordinal and there is $B\in F$ not containing the point $\kappa$. By Claim \ref{unbddclaim}, $B$ is bounded in $\kappa$. Furthermore, there must be some $n$ so that $B\cap \{\xi<\beta:\rho_2(\xi,\beta)\le n\}\in F$, proving the present claim.

For $F\in A$ and $\beta\in \kappa\setminus\alpha(F)$, let
$n_{\beta,F}$ be the unique $n$ so that $\{\xi<\alpha(F):\rho_2(\xi,\beta)= n\}\in F$.

If there is $\beta\in \Lim(\kappa)$ so that 
$\sup\{ n_{\beta,F}:\alpha(F)<\beta\}=\omega,$
then for each $n<\omega$ choose $F_n\in A$ so that $n_{\beta,F_n}>n$. In this case, $\{F_n:n<\omega\}$ converges to $\kappa$.

So we may assume that
$$n_\beta:=\sup\{ n_{\beta,F}:\alpha(F)<\beta\}<\omega,$$
and from this we will derive a contradiction.

In this case, $\{\alpha(F):F\in A\}$ is unbounded, otherwise if $\beta<\kappa$ is an upper bound, we can use Fact \ref{fact:walks} (1) together with our assumption to construct an open neighborhood of $\kappa$ disjoint from $A$.
Take an unbounded set $B\subseteq \Lim(\kappa)$ and $n<\omega$ so that $n_\beta=n$ for all $\beta\in B$. By Fact \ref{fact:walks} (2), there are $F\in A$ and $\beta\in B$ so that $\alpha(F)<\beta$ and $\rho_2(\alpha(F),\beta)>n$.

\begin{claim}
There is $\eta<\alpha(F)$ so that $\rho_2(\xi,\beta)> n$ for all $\xi\in (\eta, \alpha)$.
\end{claim}
Let $\mathrm{Tr}(\alpha(F),\beta)=\{\beta_0,\ldots,\beta_{n+1}\}$. Take $\eta=\sup_{i<n} C_{\beta_i}\cap \alpha$. Since $\alpha\not\in C_{\beta_i}$ for $i<n$, we have $\eta<\alpha$. For any $\xi\in (\eta,\alpha)$, the walk from $\beta$ to $\xi$ goes through $\{\beta_0,\ldots,\beta_n\}$, so must take at least $n+1$ steps, proving the claim.

By minimality of $\alpha(F)$, $\eta+1\not\in F$, so $(\eta,\alpha(F))\in F$ and therefore
$$\{\xi<\alpha(F):\rho_2(\xi,\beta)\ge n+1\}\in F.$$
But $n+1>n_\beta \ge n_{\beta,F}$, contradicting the definition of $n_{\beta,F}$.
\end{proof}

\begin{remark}
In fact, $\kappa$ is a Fr\'echet $\alpha_1$ point in $\beta X$.
\end{remark}

The tightness of a point does not increase in subspaces. Therefore:
\begin{corollary}
If $\Box(\kappa)$ holds, there is a compact space $X$ and $x\in X$ a Fr\'echet point so that $t(x,X_\delta)=\kappa$.
\end{corollary}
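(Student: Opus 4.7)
The plan is to simply assemble the ingredients already in place: take $X$ to be the Fréchet $\alpha_1$-space $\kappa+1$ constructed in Theorem~\ref{thm:example}, and set $Y = \beta X$. Then $Y$ is compact Hausdorff by construction, the point $\kappa \in X \subseteq Y$ is Fréchet in $Y$ by Theorem~\ref{thm:localexample}, and it remains only to check that $t(\kappa, Y_\delta) \geq \kappa$. (Equality then follows once we observe the matching upper bound, but the content of the corollary is really the lower bound.)

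For the lower bound I would use the remark preceding the corollary---that the tightness of a point does not increase when passing to a subspace---applied to the inclusion $X_\delta \hookrightarrow Y_\delta$. The one small thing to justify is that $X_\delta$ is indeed the subspace topology on $X$ inherited from $Y_\delta$: if $V = \bigcap_n V_n$ is a $G_\delta$ in $X$ with each $V_n = U_n \cap X$ for some $U_n$ open in $Y$, then $V = \bigl(\bigcap_n U_n\bigr) \cap X$ is the trace on $X$ of a $G_\delta$ set of $Y$, and conversely every such trace is $G_\delta$ in $X$; so the $G_\delta$-bases correspond. Given this, if $A \subseteq X$ satisfies $\kappa \in \overline{A}^{X_\delta}$, and if $B \subseteq A$ witnesses $\kappa \in \overline{B}^{Y_\delta}$, then $\kappa \in \overline{B}^{Y_\delta} \cap X = \overline{B}^{X_\delta}$, so a small witness in $Y_\delta$ would yield a small witness in $X_\delta$. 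Taking $A = \kappa$, Theorem~\ref{thm:example} (specifically Claims~\ref{unbddclaim} and the final claim of that proof) tells us $\kappa \in \overline{\kappa}^{X_\delta}$ but no $B \subseteq \kappa$ of size $<\kappa$ has $\kappa$ in its $X_\delta$-closure (since any such $B$ is bounded by regularity of $\kappa$, and bounded sets are excluded by a single $G_\delta$ neighborhood of $\kappa$). Consequently $t(\kappa, Y_\delta) \geq \kappa$.

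The main potential obstacle is the formal verification that the $G_\delta$-modification commutes with taking subspaces, but this is a quick topological check as above and requires nothing about $\beta X$ beyond that $X$ sits inside it as a topological subspace. Everything else---compactness of $\beta X$, the Fréchet property of $\kappa$ in $\beta X$, and the existence of an $X$ with $t(\kappa, X_\delta) = \kappa$ from $\Box(\kappa)$---has been established in the theorems just proved, so the corollary is essentially a one-line combination of Theorem~\ref{thm:example} and Theorem~\ref{thm:localexample} via the subspace observation.
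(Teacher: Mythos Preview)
Your proposal is correct and follows exactly the paper's approach: the corollary is stated immediately after the one-line remark that tightness of a point does not increase in subspaces, and the intended argument is precisely to combine Theorem~\ref{thm:example} with Theorem~\ref{thm:localexample} via that subspace observation applied to $X_\delta \subseteq (\beta X)_\delta$. Your added verification that $(X_\delta)$ coincides with the subspace topology inherited from $(\beta X)_\delta$ is a useful detail the paper leaves implicit, and your observation that the real content is the lower bound $t(\kappa,(\beta X)_\delta)\ge\kappa$ is apt---the paper does not argue the upper bound separately either.
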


\begin{remark}
If $X$ embeds the sequential fan $S(\omega)$, then it is easy to show that the base point cannot have countable tightness in any compactification. Therefore the example in \cite{018DJSSW0} does not satisfy the conclusions of Theorem \ref{thm:localexample}.
\end{remark}

\bibliography{billtop}{}
\bibliographystyle{plain}

\end{document}